\title{On generalizing the Van der Waerden theorem to some symmetric functions}
\author{Arie Bialostocki, Vladyslav Oles}
    \date{}
\definecolor{color1}{HTML}{FF0000}
\definecolor{color2}{HTML}{00FFFF}
\newcommand{\edit}[1]{{{\color{black}{#1}}}}
\newtheorem{theorem}{Theorem}
\newtheorem{lemma}{Proposition}
\newcommand{\ZZ}{\mathbb{Z}}
\newcommand{\NN}{\mathbb{N}}
\newcommand{\FF}{\mathcal{F}}
\DeclareMathOperator*{\defeq}{=}
\begin{document}

\maketitle

\begin{abstract}
Let $n,m$ be positive integers and $c \in \ZZ_n$, where $\ZZ_n$ is the ring of integers modulo $n$. \edit{We address the following problem,} partially solved by N. Alon. Does an infinite sequence over $\ZZ_n$ contain $m$ same-length consecutive blocks $B_1, \ldots, B_m$ s.t. $\sum B_j + c \prod B_j = 0$ for every $j=1,\ldots,m$ (where $\sum B$ and $\prod B$ denote, respectively, the sum and the product of the elements in block $B$)? In the case of $c=0$, this problem is equivalent to the Van der Waerden theorem. \edit{We provide an almost complete answer to the above problem, excluding only the case of square-free $n$ and $c=-1$.} After investigating $B \mapsto \sum B + c\prod B$, we provide related examples of generalizing the Van der Waerden theorem to symmetric functions.
\end{abstract}

\edit{
\section{Background}
In 1927, van der Waerden proved a seminal theorem stating that any finite coloring of the integers contains a monochromatic arithmetic progression of arbitrary length \cite{van1927beweis}. Subsequent generalizations established that this phenomenon persists in far broader contexts, extending to the fields of number theory, logic, algebra, analysis, and computer science. Below we list a few well-known generalizations of the Van der Waerden theorem.
\begin{itemize}
    \item Hales and Jewett showed that any finite coloring of a sufficiently high-dimensional combinatorial cube contains monochromatic combinational lines \cite{hales2009regularity}.
    \item Gallai and Witt extended the Van der Waerden theorem by showing that any finite coloring of the integer lattice $\ZZ^d$ contains a monochromatic affine copy of every finite point configuration
    \cite{witt1952kombinatorischer, soifer2010ramsey}.
    \item Rado characterized all linear equations that have monochromatic solutions in every finite coloring \cite{rado1933studien}.
    \item In the density setting, Szemer{\'e}di proved that any subset of the integers with positive upper density must contain arbitrarily long arithmetic progressions \cite{szemeredi1975sets}, a result that answered the Erd{\"o}s–Tur{\'a}n conjecture and can be seen as a density version of van der Waerden’s theorem. Katznelson and Furstenberg provided an additional proof of the Szemer{\'e}di theorem using ergodic theory \cite{furstenberg1991density}.
    \item Erd{\"o}s and Graham later established the canonical Van der Waerden theorem by proving that any sufficiently large finite coloring of $\NN$ yields either a monochromatic or a rainbow arithmetic progression of arbitrary length \cite{erdos1980old}.
    \item Bergelson and Leibman proved a polynomial generalization of the Van der Waerden theorem by replacing a linear polynomial that defines arithmetic progression with a polynomial of arbitrary degree. \cite{bergelson1996polynomial}.
\end{itemize}
}

\edit{In addition, a dispersed collection of other generalizations recently appeared in Chapters 2--7 of \cite{landman2014ramsey}, too varied to enumerate here.}

\section{Introduction}
\edit{This work considers the zero-sum formulation of the Van der Waerden theorem and generalizes it to various symmetric functions.} The problems addressed in our paper lie on the intersection of combinatorics on words and Ramsey theory and stem from two primary sources.
\begin{itemize}
    \item The first one is the classical work of Thue \cite{thue1906uber},\cite{berstel1995axel} who proved the existence of an infinite sequence over a 3-letter alphabet with no identical consecutive blocks. This seminal paper has developed into a broad theory of combinatorics on words. One notable direction in this field involves assuming an algebraic structure on the alphabet, particularly the ring of integers modulo $n$ \cite{au2011van},\cite{cassaigne2014avoiding}.
\item The second source is the zero-sum Ramsey theory on the integers, addressed in Chapter 10 of \cite{landman2014ramsey}. In essence, the colors in the traditional Ramsey theory are replaced with the elements of $\ZZ_n$, and the notion of monochromatic is replaced with the notion of zero-sum. As will be seen below, the problems addressed in our paper relate to the classical Van der Waerden theorem \cite{graham1991ramsey}.
\end{itemize}


Let $n$ be a positive integer and let $\ZZ_n$ be the ring of integers modulo $n$. Consider an arbitrary sequence $A=\{a_k\}_{k=1}^\infty$ over $\ZZ_n$. A \textit{block} of length $l$ consists of $l>1$ consecutive elements from $A$, relabeled and reindexed as $b_k$'s and denoted by $B = (b_{s+1}, \ldots, b_{s+l}) \in \ZZ_n^l$. Furthermore, we say that blocks $B_1, \ldots, B_m$ are \textit{consecutive} if the first element of block $B_{j+1}$ follows the last element of block $B_j$ for every $j = 1,\ldots,m-1$.
Next, consider a family $\FF = \{f^{(l)}\}_{l=2}^\infty$ where each $f^{(l)}:\ZZ_n^l\to \ZZ_n$ is a function in $l$ variables. For a positive integer $m$, we say that $\FF$ is $m$-vanishing if for every sequence $A=\{a_k\}_{k=1}^\infty$ over $\ZZ_n$ there exist integer $l > 1$ and $m$ consecutive blocks $B_1, \ldots, B_m$ each of length $l$ such that $f^{(l)}(B_1) = \ldots = f^{(l)}(B_m) = 0$. Finally, we say that $\FF$ is \textit{vanishing} if it is $m$-vanishing for every positive integer $m$.

For the sake of simplicity, we will denote the sum and the product over $\ZZ_n$ of all the elements in a block $B$ as, respectively, $\sum B$ and $\prod B$. Furthermore, we will use $\ZZ_n$ arithmetic throughout the paper unless stated otherwise.

In this paper, we investigate the vanishing property of the family $$\FF_c \defeq \{(b_1,\ldots,b_l) \mapsto \sum_{i=1}^l b_i + c \prod_{i=1}^l b_i: l=2,3,\ldots\}$$ for every $c \in \ZZ_n$. It has been motivated by the following Theorem \ref{thm:VdW}, which is equivalent to the Van der Waerden theorem \edit{(consider an auxiliary sequence $A' = \{a'_k\}_{k=1}^\infty$ with $a'_k = \sum_{i=1}^ka_i$ and realize that a monochromatic arithmetic progression of length $m$ in $A'$ is equivalent to $m-1$ consecutive zero-sum blocks in $A$, see e.g. Theorem 4 in \cite{au2011van}).}
\begin{theorem}
    \label{thm:VdW}
    Let $n$ be a positive integer. Then the family $$\FF = \{(b_1,\ldots,b_l)\mapsto \sum_{i=1}^l b_i: l=2,3,\ldots\}$$ is vanishing.
\end{theorem}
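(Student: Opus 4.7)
My plan is to reduce the statement to the classical Van der Waerden theorem using exactly the partial-sum trick hinted at in the excerpt.

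First, given an arbitrary sequence $A=\{a_k\}_{k=1}^\infty$ over $\ZZ_n$ and a positive integer $m$, I would introduce the auxiliary sequence $A' = \{a'_k\}_{k=0}^\infty$ defined by $a'_0 = 0$ and $a'_k = \sum_{i=1}^k a_i$, again taken in $\ZZ_n$. The key observation is that for any $s \geq 0$ and $l \geq 1$, the sum over the block $(a_{s+1},\ldots,a_{s+l})$ equals $a'_{s+l}-a'_s$. Consequently, if $B_1,\ldots,B_m$ are the $m$ consecutive length-$l$ blocks starting at position $s+1$, then the conditions $\sum B_1 = \cdots = \sum B_m = 0$ are equivalent to the equalities $a'_s = a'_{s+l} = a'_{s+2l} = \cdots = a'_{s+ml}$, i.e.\ to the existence of an arithmetic progression of length $m+1$ with common difference $l$ on which $A'$ is constant.

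Next, I would interpret $A'$ as an $n$-coloring of $\NN$ (the color of $k$ being $a'_k \in \ZZ_n$) and invoke the Van der Waerden theorem. Since we need $l > 1$ for genuine blocks, I would apply Van der Waerden to the subsequence $\{a'_{2k}\}_{k=0}^\infty$, which is again an $n$-coloring of $\NN$. This produces a monochromatic arithmetic progression $a'_{2k_0} = a'_{2k_0 + 2d} = \cdots = a'_{2k_0 + 2md}$ of length $m+1$ with some common difference $2d \geq 2$. Setting $s = 2k_0$ and $l = 2d$, this monochromatic progression translates back, via the correspondence above, into $m$ consecutive length-$l$ blocks in $A$ on which the sum function vanishes.

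Since $m$ was arbitrary, $\FF$ is $m$-vanishing for every $m$, hence vanishing. There is no real obstacle beyond the common-difference caveat: the equivalence between zero-sum consecutive blocks in $A$ and monochromatic APs in $A'$ is a direct algebraic identity, and the only care needed is to force $l \geq 2$, which I handle by thinning $A'$ to its even-indexed subsequence before applying Van der Waerden.
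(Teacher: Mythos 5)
Your reduction via partial sums is correct and is exactly the argument the paper sketches (the auxiliary sequence $a'_k=\sum_{i=1}^k a_i$ turning $m$ consecutive zero-sum blocks into a monochromatic arithmetic progression of length $m+1$, then invoking Van der Waerden). Your extra care in forcing $l\geq 2$ by passing to the even-indexed subsequence is a valid way to handle the paper's requirement that blocks have length greater than $1$, so nothing is missing.
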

Considering $\FF_c$ is a particular case of a wider area of investigation of elementary symmetric polynomials appearing in \cite{bialostocki1990zero}.
 As it can be seen from Theorem \ref{thm:|c|>1} below, we only need to investigate $\FF_c$ for $c \in \{1, -1\}$.
\begin{theorem}
    \label{thm:|c|>1}
    If $n > 1$, then $\FF_c$ is not $m$-vanishing for any $c \in \ZZ_n\setminus\{0,1,-1\}$ and $m>0$.
\end{theorem}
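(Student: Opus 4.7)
The plan is to exhibit a single infinite sequence $A$ over $\ZZ_n$ with the property that no block $B$ of length $l \geq 2$ satisfies $\sum B + c \prod B = 0$; this suffices because then $\FF_c$ cannot be $m$-vanishing even for $m=1$, hence for no $m \geq 1$. Note that for $n \in \{2, 3\}$ the set $\ZZ_n \setminus \{0, 1, -1\}$ is empty, so the statement is vacuous and we may assume $n \geq 4$, in which case $1 \neq -1$ in $\ZZ_n$.

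My candidate is the alternating sequence $A = (1, -1, 1, -1, \ldots)$. The motivation is structural: any block of this sequence has sum in $\{-1, 0, 1\}$ and product in $\{-1, 1\}$, so the possible values of $f^{(l)}(B)$ are drawn from the short list $\{\pm c,\ \pm 1 \pm c\}$, and the three exclusions $c \neq 0, 1, -1$ in the hypothesis are exactly what rules each of these out.

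To verify, I would case-split on the parity of $l$ (and, when $l$ is odd, on the first entry of $B$). When $l$ is even, $\sum B = 0$ and $\prod B = (-1)^{l/2}$, so $f^{(l)}(B) = \pm c$, which is nonzero since $c \neq 0$. When $l$ is odd, $\sum B = \pm 1$ and $\prod B = \pm 1$, with signs determined by the starting entry of $B$ and by the parity of $(l\pm 1)/2$, so $f^{(l)}(B) \in \{1+c,\ 1-c,\ -1+c,\ -1-c\}$, all of which are nonzero because $c \neq \pm 1$.

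I do not foresee a serious obstacle: the rigid structure of $A$ reduces everything to a finite case analysis, and the hypothesis $c \notin \{0, 1, -1\}$ emerges naturally as precisely what is needed. The only care required is bookkeeping to track both the starting parity of $B$ and the residue of $l$ modulo $4$, which together determine whether $+c$ or $-c$ appears in each subcase.
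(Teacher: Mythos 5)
Your proposal is correct and takes essentially the same route as the paper: the same alternating $\pm 1$ sequence, with the observation that every block has sum in $\{0,1,-1\}$ and product in $\{1,-1\}$, so $\sum B + c\prod B = 0$ would force $c \in \{0,1,-1\}$. The case analysis on $l$ and the starting entry is just a more explicit bookkeeping of the same argument.
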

\begin{proof}
    Consider sequence $A = -1, 1, -1, 1,\ldots$ over $\ZZ_n$ and an arbitrary block $B$ within it. Trivially, $\sum B \in \{0,1,-1\}$ and $\prod B \in \{1, -1\}$. Then $\sum B + c\prod B = 0$ implies either $0 = (-1)^kc$ or $(-1)^k = c$, and either way we arrive at a contradiction.
\end{proof}
In Sections \ref{c=1} and \ref{c=-1} we consider the cases of $c=1$ and $c=-1$, respectively. In Section \ref{examples} we provide additional examples of generalizing \edit{Theorem \ref{thm:VdW}, i.e. the zero-sum formulation of} the Van der Waerden theorem.
\section{The case of $c = 1$}
\label{c=1}
The main result of this section is Theorem \ref{thm:c=1} below which gives a complete classification of the $m$-vanishing property of $\FF_1$. The proof follows from several propositions provided further below, and Theorems \ref{thm:p^k} and \ref{thm:pq^2} from Section \ref{c=-1}.

\begin{theorem}
    \label{thm:c=1}
    Let $n>1$. 
    \begin{itemize}
        \item[(a)] If $n \notin \{2,3,4,6,8\}$, then $\FF_1$ is not $m$-vanishing for any $m > 0$.
        \item[(b)] If $n \in \{2,3,4,8\}$, then $\FF_1$ is vanishing.
        \item[(c)] If $n = 6$, then $\FF_1$ is 1-vanishing but not $m$-vanishing for any $m > 1$.
    \end{itemize}
\end{theorem}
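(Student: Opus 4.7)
My plan is to treat parts (a), (b), (c) separately, using Theorem \ref{thm:VdW} as the engine for (b) and explicit avoiding constructions for (a) and (c).

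For part (a), for each $n \notin \{2,3,4,6,8\}$ I aim to exhibit a single infinite sequence over $\ZZ_n$ that contains no block $B$ of length $\ge 2$ with $\sum B + \prod B = 0$; such a sequence trivially witnesses failure of $m$-vanishing for every $m \ge 1$. The first candidate is a constant sequence $a_k = a$, reducing the condition to $la + a^l \not\equiv 0 \pmod n$ for all $l \ge 2$. When constants fail (e.g.\ $a = 1$ fails at $l = n-1$), I would try a period-two sequence $a, b, a, b, \ldots$; the windowed sums and products are then periodic in $l$ modulo $n$, so verification reduces to checking finitely many $l$. For instance, at $n = 5$ the sequence $2, 3, 2, 3, \ldots$ works: even-length blocks have sum $0$ and product $1$ (summing to $1$), and odd-length blocks give values in $\{1, 4\}$. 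I would organize the case analysis by the prime factorization of $n$ (primes $p \ge 5$, higher prime powers of $2$ and $3$, and composite $n$ with a factor $\ge 5$) and use the Chinese Remainder Theorem to stitch constructions across coprime factors. The main obstacle is identifying a uniformly working short-period template; once block statistics are observed to be periodic in $l$, verification collapses to a finite computation.

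For part (b), with $n \in \{2, 3, 4, 8\}$ the plan is a two-step reduction to Theorem \ref{thm:VdW}. Step one applies Theorem \ref{thm:VdW} to produce $m$ consecutive same-length blocks of sum $0$. Step two refines these so that additionally $\prod B_j = 0$; since each such $n$ is a prime power $p^k$, the ring $\ZZ_n$ is local, and a block containing sufficiently many non-units (totalling at least $k$ factors of $p$) has product $0$. A second application of Van der Waerden to the indicator sequence of non-units produces consecutive runs of blocks each meeting the non-unit positions with the required multiplicity. The edge case where the input sequence consists entirely of units (i.e.\ takes values in $\ZZ_n^*$) must be handled separately; in each of the four small rings this reduces to a direct finite pigeonhole over the possible $(\sum B, \prod B)$ pairs, exploiting that $\ZZ_n^*$ has bounded size. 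This all-units edge case is, in my view, the main obstacle of part (b), because the VdW zero-sum blocks are no longer automatically $\FF_1$-good when $\prod B$ is a nonzero unit.

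For part (c), the $1$-vanishing claim at $n = 6$ is finitary and I would verify it by pigeonhole on short prefixes of any $\ZZ_6$-sequence: some block of bounded length must satisfy $\sum B + \prod B = 0$. Failure of $m$-vanishing for $m \ge 2$ requires an explicit infinite $\ZZ_6$-sequence in which no two consecutive equal-length blocks are both $\FF_1$-good. Under $\ZZ_6 \cong \ZZ_2 \times \ZZ_3$, $\FF_1$-goodness over $\ZZ_6$ is equivalent to simultaneous $\FF_1$-goodness over $\ZZ_2$ and $\ZZ_3$; while part (b) ensures each factor alone admits arbitrarily many consecutive $\FF_1$-blocks, their (position, length) event sets can still differ. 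I would therefore design the $\ZZ_2$ and $\ZZ_3$ projections so that, for every fixed length $l$, at most one of the two projections exhibits two consecutive $\FF_1$-blocks of length $l$, preventing a simultaneous occurrence. Coordinating these two projections within the small structure of $\ZZ_2$ and $\ZZ_3$ is the most delicate step of the proof, since it threads the narrow gap between guaranteed $1$-vanishing and forbidden $2$-vanishing.
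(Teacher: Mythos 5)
There are genuine gaps in all three parts; the proposal is a plan whose hardest steps are exactly the ones left unresolved. For part (a), the constant/period-two template you propose provably cannot cover the primes $p\equiv 3 \bmod 4$ (e.g.\ $n=7,11$): for a period-two sequence $a,b,a,b,\ldots$ over $\ZZ_p$, the even-length windows force $b=-a$ (otherwise the sum sweeps all residues while the product cycles with period coprime to $p$, so some even block violates the condition), and then odd windows avoid $\sum B+\prod B=0$ only if $-1\notin\langle -a^2\rangle$; when $p\equiv 3\bmod 4$ the element $-a^2$ is a nonresidue, hence generates a subgroup of even order containing $-1$, so no choice of $a$ works. This is precisely where the paper has to do real work: for $p\equiv 3\bmod 4$, $p>11$ it builds a period-$(r+1)$ sequence $x,y,\ldots,y$ with $x+ry=0$, $xy^r=1$, $r\in\{2,3\}$, whose existence needs a number-theoretic argument (cubic residues and a square root of $-3$), it handles $n=7,11$ by ad hoc sequences, cites the literature for $p\equiv 1\bmod 4$, and supplies separate constructions for $9\mid n$, for $n=8u$ with $u>1$, and for $pq^2\mid n$. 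Your acknowledged ``main obstacle'' is therefore the actual content of part (a), not a finite verification detail. (One simplification you could have used: an avoiding sequence modulo a single divisor of $n$ lifts to $\ZZ_n$, so no Chinese-Remainder stitching is needed.)

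For part (b), the all-units edge case you flag is indeed the crux, but your proposed fix does not close it: a pigeonhole/VdW argument on $(\sum B,\prod B)$ pairs produces $m$ consecutive blocks with sum $0$ and a common \emph{nonzero} unit product $u$, which does not satisfy $\sum B+\prod B=0$. The missing idea (Alon's, as used in the paper) is to also record trailing elements in the coloring, shift the blocks so their entries are unchanged, and delete one endpoint $b$: the trimmed block then has sum $-b$ and product $b^{-1}=b$, using that \emph{every unit of $\ZZ_3$, $\ZZ_4$, $\ZZ_8$ is its own inverse}. Nothing in your outline uses a property separating $\{2,3,4,8\}$ from $\{9,16,27,\ldots\}$, which are equally local rings with small unit groups but for which $\FF_1$ is not even $1$-vanishing by part (a); so as written the outline would prove a false statement. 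For part (c), the $1$-vanishing claim for $n=6$ is only asserted (``pigeonhole on short prefixes''), whereas the paper needs a genuinely involved argument (a $6^5$-coloring fed into Van der Waerden plus a structural analysis exploiting that $1$ and $5$ cannot be adjacent), and the non-$2$-vanishing direction needs an explicit witness; the paper's sequence $1,3,5,3,1,3,5,3,\ldots$ with a parity count of $3$'s settles it in a few lines, while your $\ZZ_2\times\ZZ_3$ coordination remains a design goal rather than a construction.
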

\begin{proof}
    Part (c) is a result of combining Propositions \ref{lem:n=6,m=1,c=1} and \ref{lem:n=6,m>1}, and part (b) follows from Theorem \ref{thm:p^k} for the case of $n=2$, Proposition \ref{lem:n=3} for the case of $n=3$, and Proposition \ref{lem:n=4,8} for the case of $n \in \{4,8\}$.

    Let $n = p_1\ldots p_k$ be the decomposition of $n$ into prime factors (not necessarily distinct). If $p_i > 3$ for some $i = 1,\ldots,k$, then either by Proposition \ref{lem:p=4k+1} or Proposition \ref{lem:p=4k+3} there exists an infinite sequence $A$ over $\ZZ_{p_i}$ that does not contain any block $B$ satisfying $\sum B + \prod B \equiv 0 \bmod p_i$. Viewing $A$ as a sequence over $\ZZ_n$ then implies that it cannot contain a block $B$ satisfying $\sum B + \prod B \equiv 0 \bmod n$.
    
    To complete the proof of part (a), it remains to consider the case of $n = 2^h3^{k-h} > 8$ for $0 \leq h \leq k$. It follows that $k \geq 2$, and the subcases of $h = 0$ and $h=k$ are handled by Propositions \ref{lem:9u} and \ref{lem:8u}, respectively. Finally, the proof of the subcase of $0 < h < k$, which implies $k \geq 3$, is given by Theorem \ref{thm:pq^2}.
    
\end{proof}

\begin{lemma}
    If $n=3$, then $\FF_1$ is vanishing.
    \label{lem:n=3}
\end{lemma}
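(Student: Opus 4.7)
The plan is to combine Theorem~\ref{thm:VdW} with a case analysis on the distribution of zeros in $A$. The key observation is that if a block $B$ over $\ZZ_3$ contains a zero then $\prod B = 0$, so the condition $\sum B + \prod B \equiv 0$ collapses to the pure zero-sum condition $\sum B \equiv 0$.

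If every zero-free substring of $A$ has length at most some fixed bound $g$, I apply Theorem~\ref{thm:VdW} (iterating its parameter so as to force the block length above $g$) and obtain $m$ consecutive length-$l$ zero-sum blocks; each such block contains a zero, hence $\sum B_j + \prod B_j = 0 + 0 = 0$ for every $j$.

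Otherwise, $A$ contains arbitrarily long zero-free substrings, and by a compactness argument it suffices to handle infinite sequences over $\{1,-1\}$. For such a sequence, parametrize a length-$l$ block $B$ by the number $q$ of $-1$'s it contains: a direct computation in $\ZZ_3$ gives $\sum B \equiv l + q$ and $\prod B \equiv 1 + (q \bmod 2)$, so $\sum B + \prod B \equiv 0 \pmod 3$ is equivalent to $q \pmod 6$ lying in a specific $2$-element set $S_l$ that depends only on $l \pmod 3$ (namely $\{1,2\}$, $\{3,4\}$, or $\{0,5\}$ according as $l \equiv 0,1,2 \pmod 3$). Setting $C_k = |\{i \leq k : a_i = -1\}|$, the requirement on $m$ consecutive length-$l$ blocks becomes the condition that the successive increments $C_{k_0+jl} - C_{k_0+(j-1)l} \pmod 6$ all lie in $S_l$. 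I then apply Van der Waerden's theorem to the $6$-coloring $k \mapsto C_k \pmod 6$: for block length $l \equiv 2 \pmod 3$ one has $0 \in S_l = \{0,5\}$, so an $(m+1)$-term monochromatic progression (with all increments $\equiv 0 \pmod 6$) yields $m$ valid consecutive blocks.

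The \textbf{main obstacle} is that standard Van der Waerden produces arithmetic progressions of uncontrolled common difference, so it does not on its own guarantee $l \equiv 2 \pmod 3$ (indeed, no naive coloring argument can force this residue class). I would overcome this either via the polynomial Van der Waerden theorem of Bergelson and Leibman or by iterating Theorem~\ref{thm:VdW} in a way that exploits the structural restriction that $C_k$ is non-decreasing with unit increments, so that the induced $\ZZ_6$-coloring is far from arbitrary and admits progressions of the desired common difference. This residue-alignment step is the most technical portion of the argument.
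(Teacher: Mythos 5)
Your two reduction steps are sound: blocks containing a zero have product $0$, so Theorem~\ref{thm:VdW} (applied to chunk-sums to force long blocks) disposes of the case of bounded zero-free runs, and the compactness reduction to infinite sequences over $\{1,-1\}$ is legitimate since the sought configuration is finite. Your computation of the sets $S_l$ is also correct. The gap is exactly the step you flag yourself, and it is not a technicality to be engineered around later: it is fatal to the plan as stated. You search for an $(m+1)$-term monochromatic progression of the coloring $k \mapsto C_k \bmod 6$ whose common difference satisfies $l \equiv 2 \pmod 3$, but such a configuration need not exist at all, so no strengthening of van der Waerden (polynomial Bergelson--Leibman included) can be expected to produce it. Take the constant sequence $2,2,2,\ldots$: there $C_k = k$, so monochromaticity forces $l \equiv 0 \pmod 6$, which is incompatible with $l \equiv 2 \pmod 3$; the lemma still holds for this sequence (any $l \equiv 4$ or $5 \pmod 6$ works), but only via increments of $C_k$ that are nonzero mod $6$. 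Hence any successful argument must be able to exploit blocks whose sum and product do not themselves satisfy the target equation, which your scheme structurally cannot do.

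The paper's proof sidesteps the residue-alignment problem with a different device. It colors $k$ by the partial sum, the partial product, and the $m-1$ preceding letters; a monochromatic $(m+1)$-progression then yields $m$ blocks of some length $l$ with $\sum B_j = 0$ and $\prod B_j = 1$ (the sum is genuinely $0$, with the count of $-1$'s automatically adjusting to $l$, rather than being forced into a fixed residue class mod $6$). These blocks do not satisfy $\sum B + \prod B = 0$, but since every nonzero residue of $\ZZ_3$ is its own multiplicative inverse, deleting the last letter $b$ of such a block leaves a block with sum $-b$ and product $b$, hence sum plus product equal to $0$; the recorded preceding letters allow shifting the $j$-th block left by $j-1$ so that the truncated blocks of length $l-1$ are again consecutive. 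This deletion-and-shift idea is precisely what is missing from your proposal, and without some substitute for it the obstacle you identify cannot be overcome.
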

\begin{proof}
    We will follow the idea of Noga Alon \edit{(see Theorem 3.5 (a) in \cite{bialostocki1990zero})}. By the Van der Waerden theorem, there exists a positive integer $w$ s.t. every $3^{m+1}$-coloring of $1,2,\ldots,w$ has a monochromatic arithmetic progression of length $m+1$.\\
    Case 1: every block of length $w$ contains a 0. Because the Van der Waerden theorem guarantees the existence of $m$ consecutive zero-sum blocks $B_1,\ldots,B_m$ each of length $l \geq w$, it must be that $0 = \prod B_j = \sum B_j \quad \forall j=1,\ldots,m$.\\
    Case 2: there exists a block of length $w$ with no 0's, $B = (b_1, \ldots, b_w) \in \{1,2\}^{w}$. Consider a $3^{m+1}$-coloring 
    $$\chi(k) = \left(\sum_{i=1}^k b_i, \prod_{i=1}^k b_i, \begin{cases}
        b_{k-1} & \text{if $k > 1$}\\ 0 & \text{otherwise}
    \end{cases},\ldots,\begin{cases}
        b_{k-m+1} & \text{if $k > m-1$}\\ 0 & \text{otherwise}
    \end{cases}\right)\quad \forall k=1,\ldots,w$$ induced by $B$. Let $\chi(s) = \chi(s + l) = \ldots = \chi(s + ml)$ describe its monochromatic arithmetic progression of length $m+1$, so in particular the blocks $B_j = (b_{s+(j-1)l+1}, \ldots, b_{s+jl})$ satisfy $\sum B_j = 0$ and $\prod B_j = 1$ for every $j=1,\ldots,m$. Because shifting the $j$-th block left by $j-1$ for $j=1,\ldots,m$ does not change its elements due to
    \begin{align*}
        b_{s+jl-t} = b_{s+(j-1)l-t}\quad \forall t=1,\ldots,j-1,
    \end{align*}
    the blocks $\overline{B}_j \defeq (b_{s+(j-1)l-j+2}, \ldots, b_{s+jl-j+1})$ each of length $l$ must also each sum to 0 and multiply to 1. Note that this entails $l \geq 3$, and observe that every non-zero residue in $\ZZ_3$ is its own multiplicative inverse. It follows that the consecutive blocks $\overline{B}_j^{-} \defeq (b_{s+(j-1)l-j+2}, \ldots, b_{s+jl-j})$ each of length $l-1$, obtained by removing the right-most element from $\overline{B}_j$, satisfy $\sum \overline{B}_j^{-} = -\prod \overline{B}_j^{-}$ for every $j=1,\ldots,m$.
    
    
 \end{proof}

\begin{lemma}
    If $n\in\{4,8\}$, then $\FF_1$ is vanishing.
    \label{lem:n=4,8}
\end{lemma}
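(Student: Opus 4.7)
The plan is to adapt Noga Alon's argument from Proposition \ref{lem:n=3} by exploiting that for $n \in \{4,8\}$ every odd element of $\ZZ_n$ is a unit squaring to $1$. The key observation is that any block containing at least $\log_2 n$ even elements has product $0$ in $\ZZ_n$, since each even factor contributes at least one factor of $2$; this threshold is $2$ for $n=4$ and $3$ for $n=8$.

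Fix $m>0$. I would split into two cases. In Case 1, there exists $w$ such that every block of length $\geq w$ contains at least $\log_2 n$ even elements. Applying the Van der Waerden theorem to the $n$-coloring $j \mapsto \sum_{i=1}^{jw} a_i$ of $\NN$ produces a monochromatic arithmetic progression of length $m+1$, which translates into $m$ consecutive zero-sum blocks of common length $\ell \geq w$; each is forced to have $\prod B_j = 0$ by the heaviness assumption, so $\sum B_j + \prod B_j = 0$. In Case 2, for every $w$ some block of length $\geq w$ has fewer than $\log_2 n$ even elements, and a pigeonhole argument on maximal odd runs produces odd-only subsequences of any prescribed length: a length-$w$ block with too few evens splits into at most $\log_2 n$ maximal odd runs whose total length is at least $w - \log_2 n + 1$, so some run has length $\geq (w - \log_2 n + 1)/\log_2 n$.

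Within such an odd-only subsequence every element is a unit squaring to $1$ in $\ZZ_n$, which is precisely the structural property that drives Alon's argument for $n=3$. I would therefore reuse the coloring $\chi$ (tracking cumulative sum, cumulative product, and the preceding $m-1$ elements) and the ensuing shifting/removal construction from the proof of Proposition \ref{lem:n=3} essentially verbatim: a monochromatic arithmetic progression of length $m+1$ provided by Van der Waerden yields $m$ blocks $\overline{B}_j$ of common length $\ell$ with $\sum \overline{B}_j = 0$ and $\prod \overline{B}_j = 1$, and deleting the rightmost element of each (a unit, hence self-inverse) produces $m$ consecutive blocks of length $\ell-1$ satisfying $\sum + \prod = 0$ in $\ZZ_n$.

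The main obstacle is just identifying the right dichotomy: the threshold $\log_2 n$ plays a double role as both the number of even factors forcing $\prod B = 0$ in Case 1 and the cutoff below which Case 2's all-units regime kicks in. A minor technical point is verifying $\ell \geq 3$ so that the reduced blocks have length $\geq 2$; as for $n = 3$, the case $\ell = 2$ combined with $\sum = 0$ and $\prod = 1$ would force $-b^2 = 1$ for a self-inverse $b$, i.e.\ $-1 = 1$, which fails in $\ZZ_4$ and $\ZZ_8$.
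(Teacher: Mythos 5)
Your proof is correct, and it reaches the same endgame as the paper's (every odd residue of $\ZZ_4$ and $\ZZ_8$ is its own inverse, so a monochromatic progression for a sum/product/previous-elements coloring yields, via the shift-and-delete trick of Proposition \ref{lem:n=3}, $m$ consecutive blocks with $\sum+\prod=0$), but you neutralize the even residues by a genuinely different mechanism. The paper's dichotomy is on whether every block of length $w$ has product $0$; in the opposite case it keeps a whole block of nonzero product, adds a third color coordinate $\prod_i b'_i$ recording the running product of the ``odd parts,'' and argues that equality of the running product at the terms of the progression, combined with monotonicity of the order of $2$ in that running product, forces all intervening elements to be odd and each block product to equal $1$ --- a single application of Van der Waerden in the same template as Theorem \ref{thm:p^k}. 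You instead use the coarser dichotomy on whether every long block contains at least $\log_2 n$ even entries (a sufficient but not necessary condition for zero product, which is harmless since your two cases are still exhaustive), and otherwise extract arbitrarily long runs of consecutive odd elements by pigeonhole; inside such a run every element is a unit, so the $n=3$ argument applies verbatim with no auxiliary coordinate and no $2$-adic bookkeeping. Your route is more modular (it literally reduces to the unit-only situation) at the cost of the run-extraction step; two small points of hygiene are to phrase the extraction in terms of maximal runs of consecutive odd entries rather than ``subsequences,'' since consecutiveness is essential (your counting argument does give runs), and to note that $\ell=1$ is excluded trivially because a single odd entry cannot sum to $0$, so together with your $\ell=2$ observation one indeed gets $\ell\geq 3$.
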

\begin{proof}
    By the Van der Waerden theorem, there exists a positive integer $w$ s.t. every $n^{m+2}$-coloring of $1,2,\ldots,w$ has a monochromatic arithmetic progression of length $m+1$.\\
    Case 1: every block of length $w$ multiplies to 0. Because the Van der Waerden theorem guarantees the existence of $m$ consecutive zero-sum blocks $B_1,\ldots,B_m$ each of length $l \geq w$, it must be that $0 = \prod B_j = \sum B_j \quad \forall j=1,\ldots,m$.\\
    Case 2: there exists a block $B = (b_1, \ldots b_w) \in \{1,\ldots,n-1\}^{w}$ s.t. $\prod B \neq 0$. Define $b'_k = \begin{cases}
        b_k & \text{if $b_k$ is odd}\\ 1 & \text{otherwise}
    \end{cases}$ for $k=1,\ldots,w$ and consider a $n^{m+2}$-coloring 
    $$\chi(k) = \left(\sum_{i=1}^k b_i, \prod_{i=1}^k b_i, \prod_{i=1}^k b'_i, \begin{cases}
        b_{k-1} & \text{if $k > 1$}\\ 0 & \text{otherwise}
    \end{cases},\ldots,\begin{cases}
        b_{k-m+1} & \text{if $k > m-1$}\\ 0 & \text{otherwise}
    \end{cases}\right)$$ $\forall k=1,\ldots,w$ induced by $B$. Let $\chi(s) = \chi(s + l) = \ldots = \chi(s + ml)$ describe its monochromatic arithmetic progression of length $m+1$, so in particular the blocks $B_j = (b_{s+(j-1)l+1}, \ldots, b_{s+jl})$ satisfy $\sum B_j = 0$ for every $j=1,\ldots,m$. Because $\prod_{i=1}^s b_i = \prod_{i=1}^{s+ml} b_i$ and the order of 2 in the prime factorization of $\prod_{i=1}^k b_i$ is non-decreasing in $k$, it must be that $b_k$ is odd for $s < k \leq s+ml$, which implies $\prod B_j = \prod_{k=s+(j-1)l+1}^{s+jl} b'_k = 1 \quad \forall j=1,\ldots,m$. Moreover, because shifting the $j$-th block left by $j-1$ for $j=1,\ldots,m$ does not change its elements due to
    \begin{align*}
        b_{s+jl-t} = b_{s+(j-1)l-t}\quad \forall t=1,\ldots,j-1,
    \end{align*}
    the blocks $\overline{B}_j \defeq (b_{s+(j-1)l-j+2}, \ldots, b_{s+jl-j+1})$ each of length $l$ must also each sum to 0 and multiply to 1. Note that this entails $l \geq 3$, and observe that every odd residue in $\ZZ_n$ is its own multiplicative inverse. It follows that the consecutive blocks $\overline{B}_j^{-} \defeq (b_{s+(j-1)l-j+2}, \ldots, b_{s+jl-j})$ each of length $l-1$, obtained by removing the right-most element from $\overline{B}_j$, satisfy $\sum \overline{B}_j^{-} = -\prod \overline{B}_j^{-}$ for every $j=1,\ldots,m$.
    
    
 \end{proof}

\begin{lemma}
    If $n=6$, then $\FF_1$ is 1-vanishing.
    \label{lem:n=6,m=1,c=1}
\end{lemma}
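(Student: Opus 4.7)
I will follow the template of Propositions~\ref{lem:n=3} and~\ref{lem:n=4,8}, splitting on whether $A$ admits long blocks with nonzero product modulo~$6$. Let $w$ be a sufficiently large constant, fixed by the Van~der~Waerden and pigeonhole bounds appearing below.

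In the first case, every block of length $w$ in $A$ has product $\equiv 0\pmod 6$. Applying Theorem~\ref{thm:VdW} with $m = \lceil w/2\rceil$ and concatenating the resulting $m$ consecutive zero-sum blocks of common length $l\geq 2$ yields a zero-sum block $B$ of length at least $w$; it contains a sub-block of length $w$ with product $\equiv 0\pmod 6$, so $\prod B\equiv 0\pmod 6$ and $\sum B+\prod B\equiv 0\pmod 6$.

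In the opposite case, there is a block $B^* = (b_1,\ldots,b_w)$ with $\prod B^*\not\equiv 0\pmod 6$. By CRT, either all $b_i \in\{1,3,5\}$ or all $b_i \in\{1,2,4,5\}$, and the $2$-adic and $3$-adic valuations of $P_k = \prod_{i\leq k} b_i$ are bounded and monotone. Following the trick of Proposition~\ref{lem:n=4,8}, set $b'_k = b_k$ when $b_k\in\{1,5\}=\ZZ_6^\times$ and $b'_k = 1$ otherwise, so $P'_k = \prod_{i\leq k} b'_i \in \{1,5\}$ is always a unit. Applying pigeonhole to the at-most-$144$-coloring
\[
\chi(k) = (S_k\bmod 6,\;P_k\bmod 6,\;P'_k\bmod 6,\;k\bmod 2),\qquad k=0,1,\ldots,w,
\]
yields $0\leq s<s'\leq w$ with $\chi(s)=\chi(s')$. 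Combining $P_s\equiv P_{s'}\pmod 6$ with the valuation monotonicity should force $b_k\in\{1,5\}$ for $k\in(s,s']$, giving $\sum B\equiv 0$, $\prod B\equiv 1\pmod 6$, and even length $l:=s'-s$ for $B=(b_{s+1},\ldots,b_{s'})$. Truncating the last element, $\bar B^- = (b_{s+1},\ldots,b_{s'-1})$ of odd length $l-1\geq 3$, satisfies
\[
\sum\bar B^- + \prod\bar B^- \;\equiv\; -b_{s'} + b_{s'}^{-1} \;\equiv\; -b_{s'} + b_{s'} \;\equiv\; 0 \pmod 6,
\]
using that $b_{s'}\in\{1,5\}$ is self-inverse in $\ZZ_6^\times$. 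The impossibility of $l\in\{1,2\}$ (by an elementary check as in Proposition~\ref{lem:n=3}) forces $l\geq 4$, so $\bar B^-$ is a valid block of length at least~$3$.

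The delicate step is the sub-subcase where $P_s$ is a nonzero zero divisor (so $P_s\in\{2,3,4\}$): then $P_s\equiv P_{s'}\pmod 6$ does not pin down $v_2(P_s)$ or $v_3(P_s)$, and valuation monotonicity alone fails to force $(s,s']$ to consist of units. I expect to handle this by partitioning $B^*$ at its first non-unit position, applying the above argument to the initial unit-only segment when it is long enough, and otherwise refining the pigeonhole or truncation in the later segment, exploiting the orbit structure of $\{2,4\}$ (respectively $\{3\}$) under multiplication by $\{1,5\}$ within $\ZZ_6$.
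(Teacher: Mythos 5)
Your Case 1 and the unit-only part of Case 2 are fine and mirror Proposition~\ref{lem:n=4,8}, but the step you yourself call ``delicate'' is not a loose end to be tidied up later --- it is the entire difficulty of $n=6$, and your proposed patch cannot close it. The valuation argument of Proposition~\ref{lem:n=4,8} and Theorem~\ref{thm:p^k} works because modulo a prime power $p^k$ a nonzero residue has $p$-adic order less than $k$, so equal nonzero partial products force all intermediate factors to be units. In $\ZZ_6$ this breaks completely: a product can stay nonzero while its $2$-adic (or $3$-adic) valuation grows without bound (the constant sequence $2,2,2,\ldots$ has partial products $2,4,2,4,\ldots$), so $P_s\equiv P_{s'}\pmod 6$ with $P_s\in\{2,3,4\}$ tells you essentially nothing about the factors in $(s,s']$; your claim that the valuations of $P_k$ are ``bounded and monotone'' is false in exactly the cases that matter. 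Worse, your fallback (``partition at the first non-unit position, apply the argument to the unit-only segment, refine the pigeonhole otherwise'') cannot succeed even in principle, because sequences such as $2,2,2,\ldots$ or $3,1,3,1,\ldots$ contain no unit-only blocks at all, and their vanishing blocks have $(\sum B,\prod B)$ equal to $(2,4)$ or $(3,3)$ rather than $(0,1)$ perturbed by deleting a self-inverse unit. So the mechanism ``zero sum, product $1$, truncate one element'' is structurally unable to cover the zero-divisor case; a genuinely different argument is required there, not a refinement of your coloring.

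For comparison, the paper handles precisely this case by contradiction: assuming no vanishing block exists, it extracts a long block avoiding product $0$, splits (as you do, via CRT) into $B\in\{1,3,5\}^{w+3}$ and $B\in\{1,2,4,5\}^{w+3}$, and colors positions by the partial sum, a \emph{modified} partial product in which the zero divisors ($3$, respectively $2$ and $4$) are replaced by $1$, an additional counter recording the parity of the number of $2$'s, and the neighboring letters $b_k,b_{k+1}$. This pins the product of the resulting zero-sum block to exactly $3$ (respectively $4$), and the contradiction hypothesis is then used repeatedly to constrain the boundary elements ($b_s=b_{s+1}=b_{s+l}=b_{s+l+1}$, etc.) until no admissible value for the next letter remains. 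None of this machinery --- the contradiction hypothesis as a structural constraint on $A$, the modified products, the $2$-count, the neighbor components of the coloring, the endgame on boundary letters --- appears in your sketch, and it is exactly what your unresolved subcase needs.
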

\begin{proof}
    Assume that some $A$ does not contain a block whose sum and product add to 0, which implies that it contains neither $(1, 5)$ nor $(5, 1)$. By the Van der Waerden theorem, there exists a positive integer $w$ s.t. every $6^5$-coloring of $1,\ldots,w$ has a monochromatic arithmetic progression of length $2$ whose difference is at least 3. 
    By the assumption, a zero-sum block of length at least $w+3$, whose existence is also guaranteed by the Van der Waerden theorem, cannot multiply to 0. Let $B = (b_1, \ldots, b_{w+3})$ satisfy $\prod B \neq 0$, then one of the following two scenarios must hold.\\
    Case 1: $B \in \{1,3,5\}^{w+3}$. Define $b'_k = \begin{cases}
        b_k & \text{if $b_k \neq 3$}\\ 1 & \text{otherwise}
    \end{cases}$ for $k=2,\ldots,w+1$ and consider a $6^4$-coloring 
    $$\chi(k) = \left(\sum_{i=2}^k b_i, \prod_{i=2}^k b'_i, b_k, b_{k+1}\right) \quad \forall k=2,\ldots,w+1$$ induced by $B$. Let $\chi(s) = \chi(s + l)$ describe its monochromatic arithmetic progression of length $2$ for some $l \geq 3$, so in particular $\sum_{k=s+1}^{s+l}b_k = 0$ and $\prod_{k=s+1}^{s+l}b'_k = 1$. This implies $3 \in \{b_{s+1}, \ldots, b_{s+l}\}$ or else $\prod_{k=s+1}^{s+l}b_k = 1$ which violates the assumption due to $\sum_{k=s+2}^{s+l}b_k = -b_{s+1}$, $\prod_{k=s+2}^{s+l}b_k = b_{s+1}$. Therefore, $\prod_{k=s+1}^{s+l}b_k = \prod_{k=s}^{s+l}b_k = \prod_{k=s+1}^{s+l+1}b_k = 3$, which entails $3 \neq b_s = b_{s+l}$ and $3 \neq b_{s+1} = b_{s+l+1}$ or else $(b_s, \ldots, b_{s+l})$ or $(b_{s+1}, \ldots, b_{s+l+1})$ violates the assumption. Because 1 and 5 cannot be neighbors in $A$, it must be that $b_s = b_{s+1} = b_{s+l} = b_{s+l+1}$. But $b_{s+l+2}$ can be neither $b_{s+l+1}$ (or else $(b_s, \ldots, b_{s+l+2})$ violates the assumption) nor 3 (or else $(b_{s+2}, \ldots, b_{s+l+2})$ does), and we arrive at a contradiction.\\
    Case 2: $B \in \{1,2,4,5\}^{w+3}$. Define $b'_k = \begin{cases}
        b_k & \text{if $b_k \notin \{2,4\}$}\\ 1 & \text{otherwise}
    \end{cases}, b''_k = \begin{cases}
        3 & \text{if $b_k = 2$}\\ 0 & \text{otherwise}
    \end{cases}$ for $k=2,\ldots,w+1$ and consider a $6^5$-coloring 
    $$\chi(k) = \left(\sum_{i=2}^k b_i, \prod_{i=2}^k b'_i, \sum_{i=2}^k b''_i, b_k, b_{k+1}\right) \quad \forall k=2,\ldots,w+1$$ induced by $B$. Let $\chi(s) = \chi(s + l)$ describe its monochromatic arithmetic progression of length $2$ for some $l \geq 3$, so in particular $\sum_{k=s+1}^{s+l}b_k = 0$ and $\prod_{k=s+1}^{s+l}b'_k = 1$. Analogously to Case 1, block $(b_{s+1}, \ldots, b_{s+l})$ contains an even residue, and because its count of 2's is even due to $\sum_{k=s+1}^{s+l} b''_k = 0$ it must be that $\prod_{k=s+1}^{s+l} b_k = 4$. It follows that neither $b_{s+1}$ nor $b_{s+l}$ is an even residue, or else removing it would yield a block violating the assumption. Therefore, $b_s = b_{s+1} = b_{s+l} = b_{s+l+1} \notin \{2, 4\}$. But this value can be neither 1 (or else $(b_s, \ldots, b_{s+l+1})$ violates the assumption) nor 5 (or else $(b_{s+2}, \ldots, b_{s+l-1})$ does), and we arrive at a contradiction.
\end{proof}

\begin{lemma}
    If $n=6$ and $m > 1$, then neither $\FF_{-1}$ nor $\FF_1$ is $m$-vanishing.
    \label{lem:n=6,m>1}
\end{lemma}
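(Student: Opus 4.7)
The plan is to produce a single infinite sequence $A$ over $\ZZ_6$ that contains no two consecutive same-length blocks $B_1, B_2$ satisfying $\sum B_j + c \prod B_j = 0$ for both $j = 1, 2$, for each of $c \in \{-1, 1\}$. This is enough: failure of $2$-vanishing implies failure of $m$-vanishing for every $m > 1$, so one sequence handles both $\FF_1$ and $\FF_{-1}$ and every $m > 1$ in one stroke.

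My candidate is the alternating sequence $A$ with $a_k = 1$ for $k$ odd and $a_k = 3$ for $k$ even. The first step would be to observe that this choice collapses the $c = \pm 1$ conditions into a single one: every block $B$ of length $l \geq 2$ must contain at least one $3$, so $\prod B = 3^r$ for some $r \geq 1$, and $3^r \equiv 3 \pmod 6$. Since $-3 \equiv 3 \pmod 6$, both $\sum B + \prod B = 0$ and $\sum B - \prod B = 0$ collapse to the single condition $\sum B \equiv 3 \pmod 6$.

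The second step is to compute the sum of a length-$l$ block in $A$ explicitly. If $l$ is even, the sum is $2l$ regardless of starting position; if $l$ is odd, the sum is $2l - 1$ when the block starts at an odd index and $2l + 1$ when it starts at an even index. For two consecutive length-$l$ blocks $B_1, B_2$ in $A$: if $l$ is even, both sums equal $2l$, which is even and hence never congruent to $3$ modulo $6$; if $l$ is odd, the blocks start at indices of opposite parity, so their sums are $2l - 1$ and $2l + 1$, differing by $2$ modulo $6$, and therefore cannot both equal $3$.

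I don't foresee any serious obstacle here. The only creative step is picking a sequence whose block products are a single fixed nonzero element $p$ (here $p = 3$) chosen so that $-cp = p$ for both $c \in \{-1, 1\}$, and whose consecutive block sums differ by a residue that forbids both being equal to $p$. The alternating sequence $1, 3, 1, 3, \ldots$ hits both targets at once.
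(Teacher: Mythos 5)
Your proof is correct and takes essentially the same approach as the paper: exhibit a periodic sequence of odd residues in which every block contains a $3$, so that every block's product is $3$ and both conditions collapse to $\sum B \equiv 3 \pmod 6$, then rule out two consecutive such blocks by a parity argument on their sums. The only difference is your witness sequence $1,3,1,3,\ldots$ in place of the paper's $1,3,5,3,1,3,5,3,\ldots$, which makes the block-sum computation a bit more transparent but changes nothing structural.
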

\begin{proof}
    Consider $A = 1,3,5,3,1,3,5,3,\ldots$, and assume that it contains consecutive blocks $B_1,B_2$ each of length $l$ s.t. $\sum B_j - \prod B_j = 0$ or $\sum B_j + \prod B_j = 0$ for $j=1,2$. Because every block of $A$ contains a 3 it must be that $\prod B_j = 3$ and therefore $\sum B_j = 3$ for $j=1,2$. It follows that $B_1$ and $B_2$ each contain an odd number of 3's and an even number of 1's and 5's combined, so in particular $l=4k+r$ for $r\in\{1,3\}$. Then $4 \nmid 2l$ and therefore $2l$ consecutive elements from $A$ must contain an odd number of 3's, which yields a contradiction.
\end{proof}

\begin{lemma}[Theorem 3.6 in \cite{bialostocki1990zero}]
    \label{lem:p=4k+1}
    If $n$ is a prime satisfying $n \equiv 1 \bmod 4$, then $\FF_1$ is not $m$-vanishing for any $m > 0$.
\end{lemma}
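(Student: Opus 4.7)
The plan is to exhibit, for every prime $n \equiv 1 \pmod 4$, an explicit infinite sequence $A$ over $\ZZ_n$ that contains no block $B$ with $\sum B + \prod B = 0$. Since the failure of $1$-vanishing immediately implies the failure of $m$-vanishing for every $m \geq 1$, this will suffice.

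The construction exploits the standard fact that $n \equiv 1 \pmod 4$ is exactly the condition for $-1$ to be a square in $\ZZ_n^\times$. I would fix $i \in \ZZ_n$ with $i^2 = -1$ and take $A$ to be the period-two alternating sequence $i, -i, i, -i, \ldots$, whose analysis is driven by the two identities $i + (-i) = 0$ and $i \cdot (-i) = 1$.

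To verify the construction I would perform a case split on the parity of the length $l$ of an arbitrary block $B$. For $l = 2k$ the block contains exactly $k$ copies each of $i$ and $-i$ regardless of where it begins, so $\sum B = 0$ and $\prod B = (i\cdot(-i))^k = 1$, giving $\sum B + \prod B = 1 \neq 0$. The main calculation is the odd case $l = 2k+1$, where the block has one extra copy of its leading entry: after expanding $(-i)^k = (-1)^k i^k$ and $i^{2k+1} = (-1)^k i$, the two factors of $(-1)^k$ in $\prod B$ cancel, leaving $\sum B = \prod B = \pm i$ with matching signs determined by the leading entry, so $\sum B + \prod B = \pm 2i \neq 0$ since $n$ is an odd prime and $i$ is a unit. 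The sign coincidence in the odd case—matching rather than opposing—is the delicate point and is precisely the reason $c = 1$ is defeated here.
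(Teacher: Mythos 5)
Your proposal is correct: the verification splits cleanly by parity, the even case gives $\sum B + \prod B = 1$, and in the odd case both the sum and the product equal the leading entry ($\pm i$), so $\sum B + \prod B = \pm 2i \neq 0$ because $n$ is odd and $i$ is a unit; and failure of $1$-vanishing indeed kills $m$-vanishing for every $m$. Note, however, that the paper does not prove this statement at all --- it is quoted as Theorem 3.6 of \cite{bialostocki1990zero} --- so what you have produced is a self-contained replacement for an external citation. It is also worth observing that your construction is exactly the $r=1$ instance of the template the paper uses for the complementary case $n \equiv 3 \bmod 4$: Proposition \ref{lem:existence_of_xyr} produces $x,y$ with $x + ry = 0$ and $xy^r = 1$ for some $r \in \{2,3\}$, and Proposition \ref{lem:p=4k+3} then runs the periodic sequence $x, y, \ldots, y, x, y, \ldots$. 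Taking $r=1$ in that template forces $x = -y$ and $xy = 1$, i.e.\ $y^2 = -1$, which is available precisely when $n \equiv 1 \bmod 4$; your sequence $i, -i, i, -i, \ldots$ is that solution, and the resulting case analysis is considerably shorter than the $r \in \{2,3\}$ analysis needed in the $3 \bmod 4$ case. So your argument is both correct and pleasantly unified with the paper's own method for the other residue class.
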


\begin{lemma}
    \label{lem:existence_of_xyr}
    If $n > 3$ is a prime satisfying $n \equiv 3 \bmod 4$, then there exist $x,y \in \ZZ_n\setminus\{0\}$ and $r \in \{2, 3\}$ s.t. $x + ry = 0$ and $xy^r = 1$.
\end{lemma}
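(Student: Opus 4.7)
The plan is to eliminate $x$ from the system by observing that $x + ry = 0$ forces $x = -ry$, so the condition $xy^r = 1$ becomes $-ry^{r+1} = 1$. For $r = 2$ this reduces to finding $y \in \ZZ_n^*$ with $y^3 = -1/2$, and for $r = 3$ to finding $y \in \ZZ_n^*$ with $y^4 = -1/3$. Once such a $y$ is produced (automatically nonzero since the right-hand side is), the corresponding $x = -ry$ is also nonzero, so the two statements reduce to solvability questions for pure power equations in $\ZZ_n^*$. Since $n > 3$ is prime, $-1/2$ and $-1/3$ are well-defined nonzero residues.

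I would split the argument by the residue of $n$ modulo $3$; since $n > 3$ is prime, only $n \equiv 1 \pmod 3$ and $n \equiv 2 \pmod 3$ occur. In the case $n \equiv 2 \pmod 3$ we have $\gcd(3, n-1) = 1$, so the cubing map is a bijection on $\ZZ_n^*$; consequently $y^3 = -1/2$ admits a solution, and taking $r = 2$ with this $y$ and $x = -2y$ finishes the case.

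In the case $n \equiv 1 \pmod 3$ I switch to $r = 3$. Here $n \equiv 3 \pmod 4$ gives $\gcd(4, n-1) = 2$, so the image of the fourth-power map on $\ZZ_n^*$ coincides with the image of the squaring map, namely the set of quadratic residues. Therefore $y^4 = -1/3$ is solvable iff $-1/3$, equivalently $-3$, is a quadratic residue mod $n$. The classical consequence of quadratic reciprocity (or equivalently, evaluation of the Legendre symbol $\left(\tfrac{-3}{n}\right)$) asserts that $-3$ is a QR mod an odd prime $n \neq 3$ precisely when $n \equiv 1 \pmod 3$, which is exactly our subcase. Hence $y^4 = -1/3$ has a solution, and taking $x = -3y$ completes the argument.

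The main obstacle, if any, is the quadratic-reciprocity step; the rest is purely bookkeeping about which power maps are surjective on $\ZZ_n^*$ given $\gcd(k, n-1)$. I expect the proof to be short once the reduction $x = -ry$ is made and the split by $n \bmod 3$ is aligned with the hypothesis $n \equiv 3 \pmod 4$.
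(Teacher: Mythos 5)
Your proposal is correct and follows essentially the same route as the paper: both reduce via $x=-ry$ to $-ry^{r+1}=1$, take $r=2$ when cubing is surjective on $\ZZ_n^*$, and otherwise (forcing $n\equiv 1\bmod 3$) take $r=3$, using that $-3$ is a quadratic residue together with $n\equiv 3\bmod 4$ to extract a fourth root. The only cosmetic differences are that you split on $n\bmod 3$ and cite the classical evaluation of $\left(\tfrac{-3}{n}\right)$, whereas the paper splits on whether $4$ is a cubic residue, proves that $-3$ is a square directly from the existence of a nontrivial cube root of unity, and writes the fourth root explicitly as $(3z)^{(n+1)/4}$.
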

\begin{proof}
One of the following scenarios must hold.\\
    Case 1: 4 is a cubic residue. Let $x \in \ZZ_n$ satisfy $x^3 = 4$, then $y = -\frac{x}{2}$ satisfy $x+2y=0$ and $xy^2=\frac{x^3}{4}=1$.\\
    Case 2: 4 is not a cubic residue. Because every $a\in\ZZ_n$ is a cubic residue when $n\equiv2\bmod3$, it must be that $n\equiv1\bmod3$, and therefore every $a \in \ZZ_n$ has either 0 or 3 distinct cubic roots in $\ZZ_n$. In particular, $0 = a^3 - 1 = (a-1)(a^2+a+1)$ has two solutions besides the unity, at least one of them being a root of $a^2 + a + 1 = 0$. The discriminant of this quadratic polynomial is $-3$, and therefore there exists $z \in \ZZ_n$ satisfying $z^2 = -3$. it follows that $x = (3z)^{\frac{n+1}{4}}, y = -\frac{x}{3}$ satisfy $x+3y=0$ and $xy^3=\frac{-(3z)^{n+1}}{27}=1$.
\end{proof}

\begin{lemma}
    \label{lem:p=4k+3}
    If $n > 3$ is a prime satisfying $n \equiv 3 \bmod 4$, then $\FF_1$ is not $m$-vanishing for any $m > 0$.
\end{lemma}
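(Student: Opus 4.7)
My plan is to invoke Proposition~\ref{lem:existence_of_xyr} to obtain $x, y \in \ZZ_n \setminus \{0\}$ and $r \in \{2,3\}$ satisfying $x + ry = 0$ and $xy^r = 1$, and take $A$ to be the infinite sequence periodic with period $(x, y, y, \ldots, y)$ of length $r+1$ (one copy of $x$ followed by $r$ copies of $y$).

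For any block $B$ of length $l$ in $A$ containing $a$ copies of $x$, substituting $x = -ry$ gives $\sum B = (l - (r+1)a)\, y$, and the identity $(-r)\, y^{r+1} = 1$ (immediate from $xy^r = 1$) gives $\prod B = (-r)^a y^l = y^{l - (r+1)a}$. Setting $t := l - (r+1)a$, we obtain $\sum B + \prod B = ty + y^t$. When $t = 0$ this equals $1 \neq 0$; otherwise it factors as $y(t + y^{t-1})$, so the task reduces to showing $y^{t-1} \neq -t$ for every nonzero $t \in \{-r, \ldots, -1, 1, \ldots, r\}$, the range of possible $t$ values being derived from a short count of where a block may start within one period.

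I would then verify each of these inequalities using $y^{r+1} = -1/r$: each potential equality $y^{t-1} = -t$ forces a divisibility $n \mid N$ for a small explicit $N$ depending on $r$ and $t$. For primes $n > 3$ with $n \equiv 3 \pmod{4}$ all such divisibilities fail, except in two corner cases. When $n = 43$, one of the two roots of $y^4 = -1/3$ satisfies $y^3 = 1/2$ (violating the $t = -2$ condition), so I would simply pick the other root. When $n = 7$, only $r = 3$ applies, and both roots of $y^4 = -1/3$ additionally satisfy $y^2 = -3$ (violating the $t = 3$ condition), so the periodic-$(r+1)$ construction breaks down outright.

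The main obstacle is the exceptional case $n = 7$, which I would handle separately via the ad hoc period-$4$ sequence $3, 2, 5, 4, 3, 2, 5, 4, \ldots$ (of the form $a, b, -b, -a$ with $ab \equiv -1 \pmod{7}$): this sequence has sum $0$ and product $1$ over each full period, and a direct case check across the residue classes of $(l \bmod 4, \text{starting position mod } 4)$ confirms $\sum B + \prod B \not\equiv 0 \pmod{7}$ for every block $B$.
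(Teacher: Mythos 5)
Your proposal is correct and takes essentially the same route as the paper: invoke Proposition~\ref{lem:existence_of_xyr}, use the periodic sequence $x,y,\ldots,y$, reduce every block to the finitely many conditions $ty+y^t\neq 0$ each forcing $n$ to divide a small constant, and dispose of the leftover small prime by an explicit sequence. Your two wrinkles both check out — at $n=43$ the other root of $y^4=-1/3$ indeed satisfies all remaining conditions (they force divisibilities that $43$ cannot satisfy), and the period-four sequence $3,2,5,4,\ldots$ modulo $7$ avoids $\sum B+\prod B\equiv 0$ in all sixteen prefix-sum/prefix-product residue classes — whereas the paper simply handles $n=7$ and $n=11$ by its own ad hoc sequences before running the general argument for $n>11$.
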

\begin{proof}
    One can verify that the sequences 
    $A=2,3,3,3,3,2,3,3,3,3,\ldots$ and $A=5,3,3,5,3,3,\ldots$ satisfy the statement of the theorem for, respectively, 
    $n=7$ and $n=11$. Therefore, it only remains to consider the case of $n>11$.

    Let $x,y\in\ZZ_n\setminus\{0\}$ and $r\in\{2,3\}$ satisfy $x=-ry$ and $ry^{r+1}=-1$ as in Proposition \ref{lem:existence_of_xyr}, and consider the sequence $A = x,\underbrace{y,\ldots,y}_{\text{$r$ times}},x,\underbrace{y,\ldots,y}_{\text{$r$ times}},\ldots$. Any block $B$ must then satisfy $\sum B = sx+ty$ and $\prod B =x^sy^t$ for some $s \in \{0, 1\}, t \in \{0,\ldots, r\}$ s.t. $s+t < 1+r$. Assume some $B$ satisfies $\sum B = -\prod B$, which is equivalent to $(rs-t)y = (-r)^sy^{s+t}$. It immediately follows that $(s, t) \notin \{(0,0), (0, 1), (1, 0)\}$, and the remaining cases are examined below.\\
    Case 1: $s=0, t=2$. Then $-2y=y^2$, which implies $y=2$, and therefore $-1=ry^{r+1}\in \{32, 48\}$. It follows that $n \mid 33$ or $n \mid 49$.\\    
    Case 2:  $s=0, t=r=3$. Then $-3y=y^3$, which implies $y^2=-3$, and therefore $-1=3y^4=27$. It follows that $n \mid 28$.\\
    Case 3: $s=1,t=1$. Then $(r-1)y=-ry^2$, which implies $y=(1-r)r^{-1}$ and therefore $-1=(1-r)^{r+1}r^{-r} \in \{-2^{-2}, 16\cdot3^{-3}\}$. It follows that $n \mid 5$ or $n \mid 25$.\\
    Case 4: $s=1,t=2,r=3$. Then $y=-3y^3$, which implies $y^2=-3^{-1}$ and therefore $-1=3y^4 = 3^{-1}$. It follows that $n \mid 4$.\\
    Because the prime divisors of $33, 49, 28, 5, 25, 4$ are at most 11, we arrive at a contradiction.
\end{proof}

\begin{lemma}
    \label{lem:8u}
    If $n = 8u$ for $u > 1$, then $\FF_1$ is not $m$-vanishing for any $m>0$.
\end{lemma}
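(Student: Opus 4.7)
The plan is to construct, for each $n = 8u$ with $u > 1$, an explicit period-$2$ sequence $A$ over $\ZZ_n$ with no block $B$ satisfying $\sum B + \prod B \equiv 0 \pmod{n}$. Such a sequence contains no such block at all, so a fortiori no $m$ consecutive such blocks, proving $\FF_1$ is not $m$-vanishing for any $m > 0$. The construction splits on the parity of $u$: when $u$ is even I would take $A = (2u-1, 2u+1, 2u-1, 2u+1, \ldots)$, and when $u$ is odd (hence $u \ge 3$) I would take $A = (2, 8u-2, 2, 8u-2, \ldots)$, i.e.\ $2, -2, 2, -2, \ldots$ in $\ZZ_n$.

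Writing $a, b$ for the two alternating values, the first step is to record the identities $a+b$ and $ab$ modulo $8u$. In the even-$u$ case one has $a+b = 4u$ and $ab = 4u^2 - 1 \equiv -1 \pmod{8u}$, the last congruence following because $2 \mid u$ forces $8u \mid 4u^2$. In the odd-$u$ case $a+b \equiv 0$ and $ab \equiv -4 \pmod{8u}$; the crucial observation here is that $(-4)^j = \pm 2^{2j}$ is never divisible by $8u = 8 \cdot u$ when $u > 1$ is odd, because the odd factor $u$ cannot divide a power of $2$.

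Next I would enumerate the three block types and compute $\sum B + \prod B$ in each: an even-length block of length $l = 2j$ contributes $j(a+b) + (ab)^j$, while an odd-length block of length $l = 2j+1$ starting with $a$ contributes $a + j(a+b) + a \cdot (ab)^j$ (and similarly with $b$ in place of $a$). In the even-$u$ case, substituting $a+b = 4u$ and $(ab)^j = (-1)^j$ reduces every block to one of the residues $1,\,4u-2,\,4u-1,\,4u,\,4u+2$ modulo $8u$, each of which lies strictly between $0$ and $8u$ for $u > 1$. In the odd-$u$ case, even-length blocks give $\sum B + \prod B = (-4)^j$, which was already shown to be nonzero modulo $8u$, while odd-length blocks give $\sum B + \prod B = \pm 2\bigl(1 + (-4)^j\bigr)$; this is $\equiv 0 \pmod{8u}$ only if $1 + (-4)^j \equiv 0 \pmod{4u}$, which fails because $(-4)^j \equiv 0 \pmod 4$ for $j \ge 1$ forces $1 + (-4)^j \equiv 1 \pmod 4$ while $4 \mid 4u$.

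The step I expect to be the main obstacle is finding the right sequence for odd $u$. The natural choice $2u - 1,\, 2u + 1$ fails there because $(2u-1)(2u+1) = 4u^2 - 1$ reduces to $4u - 1$ rather than $-1$ modulo $8u$ when $u$ is odd, and a direct calculation then shows that the length-$3$ block $(2u-1, 2u+1, 2u-1)$ already satisfies $\sum B + \prod B \equiv 0 \pmod{8u}$. The remedy is to switch to $\pm 2$, whose product only accumulates $2$-adic valuation and therefore cannot be absorbed by the odd part of $u$; with that choice, the entire odd-$u$ analysis reduces to the elementary mod-$4$ obstruction above.
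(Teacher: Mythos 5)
Your proposal is correct, and it takes a genuinely different route from the paper. The paper uses the single sequence $3,-3,3,-3,\ldots$ for all $u>1$ and rules out a zero of $\sum B+\prod B$ by a case analysis on the block length modulo $4$, which hinges on divisibility properties of $3^{4r+2}-1$ and $3^{4r}+1$; you instead split on the parity of $u$, using the alternation $2u-1,\,2u+1$ when $u$ is even (so that the pairwise product is $\equiv-1 \bmod 8u$ and every block value lands in the short explicit list $1,\,4u-2,\,4u-1,\,4u,\,4u+2$) and the alternation $2,-2$ when $u$ is odd (so that products only accumulate powers of $2$, which the odd factor $u>1$ can never absorb, and odd-length blocks die on a mod-$4$ obstruction). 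I checked your block enumeration and residue computations in both branches and they are right; note that your odd-$u$ sequence is exactly the sequence of the paper's Theorem~\ref{thm:pq^2} specialized to $q=2$, so that branch could also be quoted rather than reproved. What the paper's approach buys is a single uniform counterexample sequence with no case split on $u$; what yours buys is a verification that is pure residue arithmetic with no valuation analysis of powers of $3$, and, notably, an argument that remains valid when $3\mid u$: for $n=24$ the paper's sequence actually contains the block $(3,-3,3)$ with $\sum B+\prod B\equiv 3+(-3)\equiv 0 \pmod{24}$, so its proof only goes through when $3\nmid u$ (which suffices for the paper, since the lemma is invoked only for $n=2^k$), whereas your construction covers every $u>1$ uniformly.
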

\begin{proof}
    Consider the sequence $A = 3, -3, 3, -3, \ldots$ and assume that some block $B$ of length $l$ satisfies $\sum B + \prod B = 0$.\\
    Case 1: $l$ is even. Then $\sum B = 0$ but $0 \neq \prod B \in \{\pm 3^k: k=1,2,\ldots\}$, which yields a contradiction.\\
    Case 2: $l = 4r+3$ for some integer $r \geq 0$. Then either $\sum B = 3, \prod B = -3^{4r+3}$ or $\sum B = -3, \prod B = 3^{4r+3}$, so $0 = \sum B + \prod B$ entails $0 = 3(3^{4r+2} - 1) = 3^{4r+2}-1 = 2(3^{4r+1} + 3^{4r} + \ldots + 1)$ and therefore $4u \mid 3^{4r+1} + 3^{4r} + \ldots + 1$. Notice that $u>1$ entails $r \neq 0$ due to $4u \nmid 4$. Because $3^{2h}\equiv1\bmod4$ and $3^{2h+1}\equiv3\bmod4$ for any integer $h\geq0$, $3^{4r+1} + 3^{4r} + \ldots + 1 \equiv 3^{4r+1} \equiv 3 \bmod 4$, which yields a contradiction.\\
    Case 3: $l = 4r+1$ for some integer $r > 0$. Then either $\sum B = 3, \prod B = 3^{4r+1}$ or $\sum B = -3, \prod B = -3^{4r+1}$, so $0 = \sum B + \prod B$ entails $0 = 3(3^{4r} + 1) = 3^{4r}+1$ which yields a contradiction because $3^{4r}\equiv1\bmod4$.
\end{proof}

\begin{lemma}
    \label{lem:9u}
    If $n = 9u$ for $u > 0$, then $\FF_1$ is not $m$-vanishing for any $m > 0$.
\end{lemma}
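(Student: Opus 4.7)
The plan is to exhibit an explicit infinite sequence over $\ZZ_{9u}$ for which no block $B$ satisfies $\sum B + \prod B \equiv 0 \pmod{9u}$, in the spirit of Proposition \ref{lem:8u}. My candidate is the period-3 sequence $A = 4, 4, 7, 4, 4, 7, \ldots$ (with $a_k = 7$ when $3 \mid k$ and $a_k = 4$ otherwise). Since $9 \mid 9u$, it suffices to show that no block has $\sum B + \prod B \equiv 0 \pmod 9$.

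The first reduction uses that every entry of $A$ is $\equiv 1 \pmod 3$, so $\sum B + \prod B \equiv l + 1 \pmod 3$ for a block of length $l$, which immediately restricts attention to $l \equiv 2 \pmod 3$.

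For a block of length $l = 3r + 2$ containing $k_1$ fours and $k_2$ sevens, I would use the identities $4 \cdot 7 \equiv 1$ and $4^3 \equiv 1 \pmod 9$ to write $\prod B \equiv 4^m \pmod 9$ with $m := (k_1 - k_2) \bmod 3$, together with $\sum B \equiv l + 3(k_1 - k_2) \pmod 9$. Case analysis on $m \in \{0, 1, 2\}$ then shows that $\sum B + \prod B \equiv l + 3m + 4^m \equiv 0 \pmod 9$ is equivalent to $m \equiv r + 1 \pmod 3$.

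The concluding step is a direct count: in $A$, a block of length $3r + 2$ contains exactly $r$ sevens if its starting position is $\equiv 1 \pmod 3$, and exactly $r + 1$ sevens otherwise, giving $m \equiv r + 2$ or $m \equiv r \pmod 3$ respectively. Since neither residue equals $r + 1 \pmod 3$, no block of $A$ can be bad. The main obstacle I anticipate is the arithmetic bookkeeping, in particular verifying the compact form $\prod B \equiv 4^{(k_1 - k_2) \bmod 3} \pmod 9$ and correctly pairing the case analysis on $m$ with the starting-position count of sevens; once these are set up, the conclusion is immediate.
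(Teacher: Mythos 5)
Your proposal is correct and follows essentially the same route as the paper: the same periodic witness sequence over $\{4,7\}$ (up to a phase shift), reduction modulo $9$ via the homomorphism $\ZZ_{9u}\to\ZZ_9$, and a case analysis on the block's composition (the paper parametrizes by counts of $4$'s and $7$'s, you by $m=(k_1-k_2)\bmod 3$ and the start position, which amounts to the same bookkeeping). The key identities you flag as the main obstacle do check out, e.g. $4^m\equiv 1+3m\pmod 9$ makes $\sum B+\prod B\equiv 3(r+2m+1)\pmod 9$, so the argument closes as you describe.
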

\begin{proof}
    The proof will assume the context of $\ZZ_9$ arithmetic unless stated otherwise. Consider the sequence $A = 7, 4, 4, 7, 4, 4, \ldots$. Any block $B$ in this sequence is comprised of $r + t$ 7's and $2r + s$ 4's for some $r = 0, 1, \ldots$, $t \in \{0, 1\}$, and $s \in \{0, 1, 2\}$ s.t. $t + s < 3$, and therefore $\sum B = 6r + 4s + 7t \equiv s + t \bmod 3$ and $\prod B = 4^{r+s}7^t \equiv 1 \bmod 3$. Assume that some block $B$ satisfies $\sum B + \prod B = 0$, so in particular $\sum B + \prod B \equiv 0 \bmod 3$ which implies $s + t \equiv 2 \bmod 3$.\\
    Case 1: $s = t = 1$. Then $\sum B + \prod B = 6r + 2 + 4^{r+1}7 = 3$, which yields a contradiction.\\
    Case 2: $s = 2, t = 0$. Then Then $\sum B + \prod B = 6r + 8 + 4^{r+2} = 6$, which yields a contradiction.
    Because $A$ does not contain any block $B$ satisfying $\sum B + \prod B = 0$, neither it can contain a block $B$ s.t. $\sum B + \prod B \equiv 0 \bmod n$.
\end{proof}

\section{The case of $c = -1$}
\label{c=-1}
In this section we provide a classification of the $m$-vanishing property of $\FF_{-1}$ for every $n$, excluding only the case where $n$ is square-free.

\begin{theorem}
    If $n=p^k$ for a prime $p$ and $k > 0$, then $\FF_{-1}$ is vanishing.
    \label{thm:p^k}
\end{theorem}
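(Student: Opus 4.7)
The plan is to extend the coloring-plus-Van der Waerden strategy of Proposition \ref{lem:n=4,8} by first reducing to a contiguous sub-block consisting entirely of units in $\ZZ_{p^k}$. Let $w$ be such that every $(p^k)^{m+1}$-coloring of $\{1, \ldots, w\}$ has a monochromatic arithmetic progression of length $m+1$, and put $W := k(w+m)$. Either every length-$W$ block of $A$ has product $\equiv 0 \pmod{p^k}$, in which case Theorem \ref{thm:VdW} applied to the auxiliary sequence whose $j$-th term is $\sum_{i=(j-1)W+1}^{jW}a_i$ yields $m$ consecutive zero-sum blocks of $A$ of length $\geq W$, each with $\prod B_j = 0$ and hence $\sum B_j - \prod B_j = 0$; or some length-$W$ block $B^*$ of $A$ has $\prod B^* \not\equiv 0 \pmod{p^k}$. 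In this second case $v_p(\prod B^*) < k$ forces at most $k-1$ non-unit entries in $B^*$, and a pigeonhole argument then extracts a contiguous sub-block $B' = (b_{\sigma+1}, \ldots, b_{\sigma+|B'|})$ of length at least $w+m$ consisting entirely of units of $\ZZ_{p^k}$.

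Within $B'$ I would apply the Van der Waerden theorem to the $(p^k)^{m+1}$-coloring
\[
\chi(i) = \left(\sum_{k=1}^i b_{\sigma+k},\ \prod_{k=1}^i b_{\sigma+k},\ b_{\sigma+i+1},\ \ldots,\ b_{\sigma+i+m-1}\right), \quad i = 1, \ldots, w,
\]
and extract $s, l$ with $\chi(s) = \chi(s+l) = \ldots = \chi(s+ml)$. Writing $B_j := (b_{\sigma+s+(j-1)l+1}, \ldots, b_{\sigma+s+jl})$, the first two coordinates give $\sum B_j = 0$ and $\prod B_j = 1$ (the latter by cancelling the unit prefix product $\prod_{k=1}^s b_{\sigma+k}$), while the forward-looking coordinates yield $b_{\sigma+s+(j-1)l+t} = b_{\sigma+s+jl+t}$ for $t = 1, \ldots, m-1$. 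The key step is to then shift each $B_j$ right by $j-1$ positions and append one further element, obtaining
\[
\overline{B}_j^+ := (b_{\sigma+s+(j-1)l+j},\ \ldots,\ b_{\sigma+s+jl+j-1},\ b_{\sigma+s+jl+j})
\]
of length $l+1$. The shift preserves both sum and product because the $j-1$ removed elements on the left equal, element-wise, the $j-1$ added elements on the right via the forward-lookup matching (which applies since $j-1 \leq m-1$), so the first $l$ entries of $\overline{B}_j^+$ still sum to $0$ and multiply to $1$; appending $d_j := b_{\sigma+s+jl+j}$ then gives $\sum \overline{B}_j^+ = d_j = \prod \overline{B}_j^+$, so $\sum \overline{B}_j^+ - \prod \overline{B}_j^+ = 0$. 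The blocks $\overline{B}_1^+, \ldots, \overline{B}_m^+$ are consecutive in $A$ since $\overline{B}_j^+$ ends at position $\sigma+s+jl+j$ while $\overline{B}_{j+1}^+$ begins at $\sigma+s+jl+j+1$.

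The principal obstacle is devising this shift-and-append step. The analogue from Proposition \ref{lem:n=4,8}, which deletes the rightmost element, requires each unit of $\ZZ_n$ to be its own inverse --- a property available only for $n \in \{2,3,4,8\}$ and unavailable for general $p^k$. Appending one element instead sidesteps the self-inverse requirement, but it requires a forward-looking coloring rather than a backward-looking one, so that the shift direction is right rather than left; with this adjustment the argument goes through uniformly for every prime power $p^k$.
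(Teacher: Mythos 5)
Your proof is correct and takes essentially the same route as the paper's: the same two-case split on whether length-$W$ blocks have vanishing product, the same Van der Waerden coloring by prefix sums, prefix products, and $m-1$ forward-lookup coordinates, and the same shift-right-by-$j-1$-then-append step that converts (sum $0$, product $1$) into $\sum \overline{B}_j^+ = \prod \overline{B}_j^+$. The only difference is minor bookkeeping: where you pre-extract an all-unit contiguous sub-block of length at least $w+m$ by pigeonhole from a length-$k(w+m)$ block of nonzero product, the paper instead adds a third coordinate tracking the running product of the ``unitized'' entries $b'_i$ and uses the monotonicity of the $p$-adic valuation of the running product to conclude that every entry spanned by the progression is a unit.
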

\begin{proof}
By the Van der Waerden theorem, there exists a positive integer $w$ s.t. every $n^{m+2}$-coloring of $1,2,\ldots,w$ has a monochromatic arithmetic progression of length $m+1$.\\
    Case 1: every block of length $w$ in the sequence multiplies to 0. Because the Van der Waerden theorem guarantees the existence of $m$ consecutive zero-sum blocks $B_1,\ldots,B_m$ each of length $l \geq w$, it must be that $0 = \prod B_j = \sum B_j \quad \forall j=1,\ldots,m$.\\
    Case 2: there exists a block $B = (b_1, \ldots b_w) \in \{1,\ldots,n-1\}^{w}$ s.t. $\prod B \neq 0$. Define $b'_k = \begin{cases}
        b_k & \text{if $p \nmid b_k$}\\ 1 & \text{otherwise}
    \end{cases}$ for $k=1,\ldots,w$ and consider a $n^{m+2}$-coloring
    $$\chi(k) = \left(\sum_{i=1}^k b_i, \prod_{i=1}^k b_i, \prod_{i=1}^k b'_i, b_{k+1}, \ldots, b_{k+m-1}\right)\quad \forall k=1,\ldots,w$$ induced by $B$. Let $\chi(s) = \chi(s + l) = \ldots = \chi(s + ml)$ describe its monochromatic arithmetic progression of length $m+1$, so in particular the blocks $B_j = (b_{s+(j-1)l+1}, \ldots, b_{s+jl})$ satisfy $\sum B_j = 0$ for every $j=1,\ldots,m$. Because $\prod_{i=1}^s b_i = \prod_{i=1}^{s+ml} b_i$ and the order of $p$ in the prime factorization of $\prod_{i=1}^k b_i$ is non-decreasing in $k$, it must be that $p \nmid b_k$ for $s < k \leq s+md$ and therefore $\prod B_j = \prod_{i=s+(j-1)l+1}^{s+jl} b'_i = 1 \quad \forall j=1,\ldots,m$. Moreover, because shifting the $j$-th block right by $j-1$ for $j=1,\ldots,m$ does not change its elements due to
    \begin{align*}
        b_{s+(j-1)l+t} = b_{s+jd+t}\quad \forall t=1,\ldots,j-1,
    \end{align*}
    the blocks $\overline{B}_j \defeq (b_{s+(j-1)l+j}, \ldots, b_{s+jl+j-1})$ must also each sum to 0 and multiply to 1. It follows that the consecutive blocks $\overline{B}_j^{+} \defeq (b_{s+(j-1)l+j}, \ldots, b_{s+jl+j})$ each of length $l+1$, obtained via extending $\overline{B}_j$ by one element on the right, satisfy $\sum \overline{B}_j^{+} = \prod \overline{B}_j^{+}$ for every $j=1,\ldots,m$.
\end{proof}

\begin{theorem}
    If $n$ satisfies $pq^2 \mid n$ for distinct primes $p, q$, then neither $\FF_1$ nor $\FF_{-1}$ is $m$-vanishing for any $m>0$.
    \label{thm:pq^2}
\end{theorem}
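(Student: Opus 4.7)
The plan is to construct, for $n$ with $pq^2 \mid n$, a single infinite sequence over $\ZZ_n$ that avoids both $\sum B + \prod B \equiv 0$ and $\sum B - \prod B \equiv 0$ for every block $B$ of length $l \geq 2$; this disproves $1$-vanishing for both $\FF_1$ and $\FF_{-1}$ simultaneously, and hence rules out $m$-vanishing for every $m \geq 1$. I would first reduce to the case $n = pq^2$: any sequence over $\ZZ_{pq^2}$ lifts to a sequence over $\ZZ_n$, and a block whose $\sum B \pm \prod B$ is nonzero modulo $pq^2$ is automatically nonzero modulo $n$.

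My candidate sequence is $A = q, -q, q, -q, \ldots$ in $\ZZ_{pq^2}$. For any block $B$ of length $l \geq 2$, an elementary computation would show that if $l$ is even then $\sum B = 0$ and $\prod B = \pm q^l$, while if $l$ is odd then $\sum B = \pm q$ and $\prod B = \pm q^l$, where the signs depend on $l$ and on the starting index of $B$ within $A$. Crucially, since we need to block both $\sum B + \prod B = 0$ and $\sum B - \prod B = 0$ at once, these signs will drop out of the final verification and need not be tracked precisely.

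To finish, I would verify in both parity cases that $\sum B \pm \prod B \not\equiv 0 \pmod{pq^2}$. In the even case the quantity equals $\pm q^l = \pm q^2 \cdot q^{l-2}$, which is nonzero in $\ZZ_{pq^2}$ because $\gcd(p, q^{l-2}) = 1$. In the odd case it equals $q(\pm 1 \pm q^{l-1})$; since $l-1 \geq 2$, the parenthesized factor reduces to $\pm 1 \pmod q$ and is therefore a unit mod $q$, hence not divisible by $pq$, and so the product with $q$ is not divisible by $pq^2$. I do not expect a substantive obstacle: once the sequence is identified, the entire argument is a two-case computation powered by a CRT split between $\ZZ_p$ and $\ZZ_{q^2}$, with the only delicacy being sign bookkeeping that is rendered moot by the symmetric $\pm$ structure of the statement.
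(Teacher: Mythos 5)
Your proposal is correct and matches the paper's proof essentially verbatim: the same sequence $q,-q,q,-q,\ldots$, the same split according to whether $\sum B$ is $0$ or $\pm q$, and the same divisibility argument using $p\nmid q^{l-2}$ in the even case and $q\nmid \pm 1\pm q^{l-1}$ together with $q^2\mid n$ in the odd case. The only cosmetic difference is your explicit reduction to $n=pq^2$, which the paper handles implicitly by arguing with $n\mid \sum B + c\prod B$ directly.
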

\begin{proof}
    We repurpose the proof of Theorem 3.5 (b) in \cite{bialostocki1990zero} as follows. Consider the sequence $A = q, -q, q, -q, \ldots$ and an arbitrary block $B$ within it. Trivially, $\sum B \in \{0,q,-q\}$ and $\prod B \in \{\pm q^k: k = 2,3,\ldots\}$. Assume $\sum B + c\prod B = 0$ for some $c \in \{1, -1\}$, which is equivalent to $n \mid \sum B + c\prod B$. It follows that $\sum B \neq 0$, or otherwise $\sum B + c\prod B \in \{q^k, -q^k\}$ for some integer $k>1$, which yields a contradiction because $p \nmid q^k$ implies that $n \nmid q^k$. But $\sum B \in \{q, -q\}$ would entail $\sum B + c\prod B \in \{q + q^k, -q+q^k, q-q^k, -q-q^k\}$ for some integer $k>1$, and therefore $n \mid q(q^{k-1} - 1)$ or $n \mid q(q^{k-1} + 1)$. However, neither $q \nmid q^{k-1} - 1$ nor $q \nmid q^{k-1} + 1$, which contradiction concludes the proof.
\end{proof}

\section{Multivariate generalization of the Van der Waerden theorem}
\label{examples}
The family of sums $\FF = \{(b_1,\ldots,b_l)\mapsto \sum_{i=1}^l b_i: l=2,3,\ldots\}$ can be viewed as an instance of families of \textit{\edit{transformation} sums}, $$\FF_g = \{(b_1,\ldots,b_l)\mapsto \sum_{i=1}^l g(b_i): l=2,3,\ldots\},$$ in which the \textit{transformation} $g:\ZZ_n\to\ZZ_n$ is taken as the identity. 
Another common example of a transformation is $x \mapsto x^r$ for some integer $r > 1$, which yields the family of power sums of degree $r$.

Note that the $m$-vanishing property is naturally extended to families of vector-valued functions $f^{(l)}:\ZZ_n \to \ZZ_n^d$ for $l=2,3,\ldots$ by replacing the scalar zero in $f^{(l)}(B_1) = \ldots = f^{(l)}(B_m) = 0$ with its vector counterpart $\mathbf{0} \in \ZZ^d$, i.e. by requiring all $d$ components of $f^{(l)}$ to simultaneously attain 0 on each of the $m$ consecutive blocks. This allows a further generalization of the families of \edit{transformation} sums by considering multiple transformations at once:
$$\FF_{(g_1,\ldots,g_d)} = \left\{(b_1,\ldots,b_l)\mapsto \left(\sum_{i=1}^l g_1(b_i), \ldots, \sum_{i=1}^l g_d(b_i)\right): l=2,3,\ldots\right\}.$$
\edit{In the following simple result, we show} that any function whose finite-dimensional value is comprised of the \edit{transformation} sums yields a vanishing family\edit{:}
\begin{theorem}
    \label{thm:vector-valued vanishing}
    Let $n$ and $d$ be positive integers and $g_i:\ZZ_n\to\ZZ_n$ for $i=1,\ldots,d$. Then the family $\FF_{(g_1,\ldots,g_d)}$ 
    is vanishing.
\end{theorem}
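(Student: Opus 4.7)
The plan is to reduce the statement to the classical Van der Waerden theorem via the partial-sum reformulation already used elsewhere in the paper (e.g.\ in the derivation of Theorem~\ref{thm:VdW} from van der Waerden's original statement). Given an arbitrary sequence $A = \{a_k\}_{k=1}^\infty$ over $\ZZ_n$, I would form the auxiliary vector-valued partial-sum sequence
$$P_k = \left(\sum_{i=1}^k g_1(a_i),\ldots,\sum_{i=1}^k g_d(a_i)\right) \in \ZZ_n^d \qquad (k \geq 0),$$
with the convention $P_0 = \mathbf{0}$. Since the codomain $\ZZ_n^d$ has only $n^d$ elements, the map $k \mapsto P_k$ is an $n^d$-coloring of the non-negative integers.

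Next, I would invoke the Van der Waerden theorem for $n^d$ colors to obtain a monochromatic arithmetic progression of length $2m+1$, say $P_s = P_{s+l'} = \cdots = P_{s+2ml'}$ for some $s \geq 0$ and $l' \geq 1$. Retaining only every other term yields a monochromatic progression $P_s = P_{s+l} = \cdots = P_{s+ml}$ of length $m+1$ with common difference $l = 2l' \geq 2$, which guarantees that the $m$ consecutive blocks $B_j = (a_{s+(j-1)l+1},\ldots,a_{s+jl})$ for $j=1,\ldots,m$ each have length $l > 1$, as required by the definition of a block.

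Finally, for every $j=1,\ldots,m$ and every $i=1,\ldots,d$ the telescoping identity
$$\sum_{k=s+(j-1)l+1}^{s+jl} g_i(a_k) = (P_{s+jl})_i - (P_{s+(j-1)l})_i = 0$$
shows that all $d$ components of the vector-valued function $f^{(l)}$ vanish simultaneously on each $B_j$, establishing $m$-vanishing; since $m$ was arbitrary, $\FF_{(g_1,\ldots,g_d)}$ is vanishing. No substantive obstacle arises once the partial-sum encoding is in place: the only minor technicality is enforcing the block-length condition $l > 1$, which is handled by the sub-sampling step above.
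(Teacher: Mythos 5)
Your proposal is correct and follows essentially the same route as the paper: color the integers by the vector of partial transformation sums and apply the Van der Waerden theorem to extract $m$ consecutive blocks on which all $d$ coordinates telescope to zero. Your sub-sampling of a length-$(2m+1)$ progression to force common difference $l \geq 2$ is a slightly more careful treatment of the block-length technicality than the paper's bare assertion that $l \geq 2$, but it is not a different argument.
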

\begin{proof}
Consider a $n^d$-coloring $$\chi(k) = \left(\sum_{i=1}^l g_1(b_i), \ldots, \sum_{i=1}^l g_d(b_i)\right) \quad \forall k=1,2\ldots$$
induced by $A$. Let $\chi(s) = \chi(s + l) = \ldots = \chi(s + ml)$ describe its monochromatic arithmetic progression of length $m+1$ for some $l \geq 2$. Because the blocks $B_j = (b_{s+(j-1)l+1}, \ldots, b_{s+jl})$ satisfy $\sum_{k=1}^l g_i(b_{s+(j-1)l+k}) = 0$ for every $j=1,\ldots,m$ and $i=1,\ldots,d$, the result immediately follows.
\end{proof}

In particular, choosing any integer $r > 0$ and applying Theorem~\ref{thm:vector-valued vanishing} for $d=r$ and $g_i=x\mapsto x^i$ for $i=1,\ldots,d$ implies that the family of \edit{the combined first $r$ power sums
$$\FF_{\text{powers},r} = \left\{(b_1,\ldots,b_l)\mapsto \left(\sum_{i=1}^l b_i, \sum_{i=1}^l b_i^2, \ldots, \sum_{i=1}^l b_i^r\right): l=2,3,\ldots\right\}$$
is vanishing. Because the elementary symmetric polynomial of degree $r$ can be expressed as a polynomial in the first $r$ power sums with no constant term via the Newton identities, it follows that the family of elementary symmetric polynomials of degree $r$ 
$$\FF_{\text{elem},r} = \left\{(b_1,\ldots,b_l)\mapsto \sum_{1 \leq i_1 < i_2 < \ldots < i_r \leq l} b_{i_1}b_{i_2}\ldots b_{i_r}: l=2,3,\ldots\right\}$$
is also vanishing.} This presents a simpler proof of Theorem 3.4 from \cite{bialostocki1990zero}.

\edit{
\section*{Acknowlegment}
The authors would like to thank the referee for her/his insightful comments, which improve the presentation of this paper.
}

\bibliographystyle{alpha}
\bibliography{references/references.bib}

@article{furstenberg1991density,
  title={A density version of the Hales-Jewett theorem},
  author={Furstenberg, Hillel and Katznelson, Yitzhak},
  journal={Journal d’Analyse Math{\'e}matique},
  volume={57},
  number={1},
  pages={64--119},
  year={1991},
  publisher={Springer}
}

@incollection{hales2009regularity,
  title={Regularity and positional games},
  author={Hales, Alfred W and Jewett, Robert I},
  booktitle={Classic Papers in Combinatorics},
  pages={320--327},
  year={2009},
  publisher={Springer}
}

@article{witt1952kombinatorischer,
  title={Ein kombinatorischer satz der elementargeometrie},
  author={Witt, Ernst},
  journal={Mathematische Nachrichten},
  volume={6},
  number={5},
  pages={261--262},
  year={1952},
  publisher={Wiley Online Library}
}

@article{van1927beweis,
  title={Beweis einer baudetschen vermutung},
  author={Van der Waerden, Bartel Leendert},
  journal={Nieuw Arch. Wiskunde},
  volume={15},
  pages={212--216},
  year={1927}
}

@article{bergelson1996polynomial,
  title={Polynomial extensions of van der Waerden’s and Szemer{\'e}di’s theorems},
  author={Bergelson, Vitaly and Leibman, Alexander},
  journal={Journal of the American Mathematical Society},
  volume={9},
  number={3},
  pages={725--753},
  year={1996}
}

@book{erdos1980old,
  title={Old and new problems and results in combinatorial number theory},
  author={Erd{\"o}s, Paul and Graham, Ronald L},
  volume={28},
  year={1980},
  publisher={L'Enseignement Mathematiques Un. Geneve}
}

@article{szemeredi1975sets,
  title={On sets of integers containing no k elements in arithmetic progression},
  author={Szemer{\'e}di, Endre},
  journal={Acta Arith},
  volume={27},
  number={199-245},
  pages={2},
  year={1975}
}

@article{rado1933studien,
  title={Studien zur kombinatorik},
  author={Rado, Richard},
  journal={Mathematische Zeitschrift},
  volume={36},
  number={1},
  pages={424--470},
  year={1933},
  publisher={Springer}
}

@book{soifer2010ramsey,
  title={Ramsey theory: Yesterday, today, and tomorrow},
  author={Soifer, Alexander},
  volume={285},
  year={2010},
  publisher={Springer Science \& Business Media}
}

@article{bialostocki1990zero,
  title={Zero sum Ramsey theorems},
  author={Bialostocki, A and Dierker, P},
  journal={Congressus Numerantium},
  volume={70},
  pages={119--130},
  year={1990},
  publisher={Utilitas Mathematica Pub. Incorporated}
}

@book{landman2014ramsey,
  title={Ramsey Theory on the integers},
  author={Landman, Bruce M and Robertson, Aaron},
  volume={73},
  year={2014},
  publisher={American Mathematical Soc.}
}

@article{thue1906uber,
  title={Uber unendliche zeichenreihen},
  author={Thue, Axel},
  journal={Norske Vid Selsk. Skr. I Mat-Nat Kl.(Christiana)},
  volume={7},
  pages={1--22},
  year={1906}
}

@book{berstel1995axel,
  title={Axel Thue's papers on repetitions in words: a translation},
  author={Berstel, Jean},
  volume={20},
  year={1995},
  publisher={D{\'e}partements de math{\'e}matiques et d'informatique, Universit{\'e} du Qu{\'e}bec {\`a} Montr{\'e}al}
}

@book{graham1991ramsey,
  title={Ramsey theory},
  author={Graham, Ronald L and Rothschild, Bruce L and Spencer, Joel H},
  volume={20},
  year={1991},
  publisher={John Wiley \& Sons}
}

@article{cassaigne2014avoiding,
  title={Avoiding three consecutive blocks of the same size and same sum},
  author={Cassaigne, Julien and Currie, James D and Schaeffer, Luke and Shallit, Jeffrey},
  journal={Journal of the ACM (JACM)},
  volume={61},
  number={2},
  pages={1--17},
  year={2014},
  publisher={ACM New York, NY, USA}
}

@article{au2011van,
  title={Van der Waerden's theorem and avoidability in words},
  author={Au, Yu-Hin and Robertson, Aaron and Shallit, Jeffrey},
  year={2011},
  publisher={Walter de Gruyter GmbH \& Co. KG}
}

\end{document}